\newcommand{\subj}[1]{\par\noindent{\bf AMS Subject Classifications: }#1.}
\newcommand{\keyw}[1]{\par\noindent{\bf Keywords: }#1.}
\numberwithin{equation}{section}
\numberwithin{figure}{section}
\newtheorem{theorem}{Theorem}[section]
\theoremstyle{definition}
\newtheorem{definition}[theorem]{Definition}
\theoremstyle{remark}
\date{}
\newcommand{\ijde}
{\vspace{-1in}\normalsize\flushleft
This is a preprint of a paper whose final and definite form will appear in\\
International Journal of Difference Equations, ISSN 0973-6069\\
{\tt http://campus.mst.edu/ijde}\\\vspace{1mm}\hrule\vspace{5mm}
\renewcommand\thefootnote{{}}


\footnotetext{\noindent\tt Received Nov 23, 2013; Revised Jan 27, 2014; Accepted Feb 15, 2014\par
\hspace*{8pt}Communicated by Agnieszka B. Malinowska}}
\def\a{\alpha}
\def\LHD{{_a\mathcal{D}_t^\a}}
\def\LHDHz{{_1\mathcal{D}_t^{0.5}}}
\begin{document}

\title{\ijde\center\Large\bf
A Discretization Method to Solve Fractional Variational Problems with Dependence on Hadamard Derivatives}

\author{{\bf Ricardo Almeida}\\
Center for Research and Development in Mathematics and Applications\\
Department of Mathematics, University of Aveiro\\
3810--193 Aveiro, Portugal\\
{\tt ricardo.almeida@ua.pt}\\[0.3cm]
{\bf Nuno R. O. Bastos}\\
Department of Mathematics, School of Technology\\
Polytechnic Institute of Viseu\\
3504--510 Viseu, Portugal\\
{\tt nbastos@mat.estv.ipv.pt}\\[0.3cm]
{\bf Delfim F. M. Torres}\\
Center for Research and Development in Mathematics and Applications\\
Department of Mathematics, University of Aveiro\\
3810--193 Aveiro, Portugal\\
{\tt delfim@ua.pt}}

\maketitle


\thispagestyle{empty}

\begin{abstract}
We provide a fast and simple method to solve fractional variational problems
with dependence on Hadamard fractional derivatives. Using a relation between
the Hadamard fractional operator and a sum involving integer-order derivatives,
we rewrite the fractional problem into a classical optimal control problem.
The latter problem is then solved by application of standard numerical techniques.
We illustrate the procedure with an example.
\end{abstract}

\subj{26A33, 49M25}

\keyw{Hadamard's fractional calculus, fractional variational problems,
discrete approximations, optimal control}

\bibliographystyle{plain}


\section{Introduction}

Fractional calculus deals with noninteger order integrals and derivatives.
In this sense, it can be seen as a generalization of ordinary calculus.
Similarly to standard (integer-order) calculus, where different notions
for integration and differentiation are available, several definitions
for fractional integrals and derivatives are also possible.
The most common fractional operators are the Riemann--Liouville and Caputo,
which are well studied in the literature. Available results
include numerical tools to handle them \cite{Almeida1}.
Here we deal with Hadamard's approach \cite{Hadamard,Kilbas2,Qian},
considering problems of the calculus of variations with dependence on this type of differentiation.

One way to find extremizers for fractional variational problems consists to deduce necessary
optimality conditions, which in this case turn out to be fractional differential equations
\cite{book:frac}. As is frequently noted, in most cases it is impossible
to find analytical solutions to such equations \cite{A:T:Leitmann,MyID:284}.
Here, instead of dealing directly with a fractional variational functional,
we approximate the fractional derivative by a sum that involves integer-order derivatives only.
In this way, we leave the field of fractional variational calculus, reducing the problem
to an ordinary optimal control problem. To solve the latter, one can use any
well-known technique to find an approximated solution. For this purpose we choose to use
a discretization process and the software package Ipopt (Interior Point OPTimizer,
\url{http://projects.coin-or.org/Ipopt}) that is designed to find solutions
to large-scale nonlinear mathematical optimization problems \cite{IPOPT}.

The text is organized as follows. In Section~\ref{sec:2}
we present a short theoretical exposition of the necessary concepts.
Our method is presented in Section~\ref{sec:3}
and then illustrated, with an example, in Section~\ref{sec:4}.
We end with Section~\ref{sec:5} of conclusion.


\section{Preliminaries}
\label{sec:2}

We recall the necessary concepts and results.

\begin{definition}
Let $x:[a,b]\to\mathbb{R}$, $\a>0$, and $n=[\a]+1$.
The (left) Hadamard fractional derivative of order $\a$  is defined by
$$
\LHD x(t)=\frac{1}{\Gamma(n-\alpha)}\left(t\frac{d}{dt}\right)^n
\int_a^t \left(\ln\frac{t}{\tau}\right)^{n-\alpha-1}\frac{x(\tau)}{\tau}d\tau,
$$
where $\Gamma$ is the Gamma function, that is,
$$
\Gamma(z)=\int_0^\infty t^{z-1}e^{-t}\,dt, \quad z>0.
$$
\end{definition}

The Gamma function has the following two nice properties:
$$
\Gamma(z+1)=z\Gamma(z), \, \forall z>0
\quad \mbox{ and } \quad
\Gamma(n)=(n-1)!,\, \forall n \in \mathbb{N}.
$$
If $\a=m$ is an integer, then one has (cf. \cite{Kilbas})
$$
{_a\mathcal{D}_t^m} x(t)=\left(t\frac{d}{dt}\right)^m x(t).
$$

In \cite{Almeida} an expansion formula for the Hadamard fractional derivative
with $\a\in(0,1)$ is proved. It has the advantage that we only need the
first-order derivative of $x$ to approximate $\LHD x(t)$ by such sum.
The result is the following one.

\begin{theorem}[See \cite{Almeida}]
\label{teo}
Let $x\in C^{2}[a,b]$ and $N\geq 2$. Then,
\begin{equation*}
\LHD x(t)= {_a\mathcal{\widetilde{D}}_t^\a} x(t) + E(t),
\end{equation*}
where
$$
{_a\mathcal{\widetilde{D}}_t^\a} x(t)
= A\left(\ln\frac{t}{a}\right)^{-\a} x(t)
+B\left(\ln\frac{t}{a}\right)^{1-\a} tx^{(1)}(t)\\
+\sum_{p=2}^N C_p\left(\ln\frac{t}{a}\right)^{1-\a-p}V_p(t)
$$
with
\begin{equation*}
\begin{split}
A &=\frac{1}{\Gamma(1-\a)}\left[
1+\sum_{p=2}^N\frac{\Gamma(p+\a-1)}{\Gamma(\a)(p-1)!}\right],\\
B&= \frac{1}{\Gamma(2-\a)}\left[
1+\sum_{p=1}^N\frac{\Gamma(p+\a-1)}{\Gamma(\a-1)p!}\right],\\
C_p &= \frac{\Gamma(p+\a-1)}{\Gamma(-\a)\Gamma(1+\a)(p-1)!},
\quad p \in \{2,\ldots,N\}, \\
V_p(t) &= \int_a^t (p-1)\left(\ln\frac{\tau}{a}\right)^{p-2}
\frac{x(\tau)}{\tau}d\tau,  \quad p \in \{2,\ldots,N\},
\end{split}
\end{equation*}
and the error $|E(t)| = \left|\LHD x(t) - {_a\mathcal{\widetilde{D}}_t^\a} x(t)\right|$ is bounded by
\begin{equation}
\label{eq:bound:e}
\tilde{E}(x,t) := \max_{\tau\in[a,t]}\left|x^{(1)}(t)+tx^{(2)}(t)\right| \,
\frac{\exp((1-\a)^2+1-\a)}{\Gamma(2-\a)(1-\a)N^{1-\a}}\left(\ln\frac{t}{a}\right)^{1-\a}(t-a).
\end{equation}
\end{theorem}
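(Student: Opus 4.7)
My plan is to restrict to $\a\in(0,1)$ (so $n=1$) and work directly from $\LHD x(t)=\frac{1}{\Gamma(1-\a)}\,t\frac{d}{dt}\int_a^t \left(\ln\frac{t}{\tau}\right)^{-\a}\frac{x(\tau)}{\tau}\,d\tau$. First I would perform two successive integrations by parts in the $\tau$-integral, each time antidifferentiating the factor $\tau^{-1}(\ln(t/\tau))^{\beta}$ (which produces a cleaner power of the logarithm and shifts a derivative onto $x$). Since $1-\a>0$ and $2-\a>0$, boundary contributions at $\tau=t$ vanish, while those at $\tau=a$ produce terms proportional to $x(a)$ and $ax'(a)$. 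After applying $t\frac{d}{dt}$ and using the Leibniz rule, this yields the exact identity $\LHD x(t)=\frac{(\ln(t/a))^{-\a}x(a)}{\Gamma(1-\a)}+\frac{ax'(a)(\ln(t/a))^{1-\a}}{\Gamma(2-\a)}+\frac{1}{\Gamma(2-\a)}\int_a^t\left(\ln\frac{t}{\tau}\right)^{1-\a}\!\bigl(x'(\tau)+\tau x''(\tau)\bigr)d\tau$, which involves only integer-order derivatives of $x$ up to order two.

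Next I would expand $(\ln(t/\tau))^{1-\a}$ by writing $\ln(t/\tau)=\ln(t/a)\bigl(1-\ln(\tau/a)/\ln(t/a)\bigr)$ and invoking Newton's generalised binomial formula. Using the identity $(-1)^p\binom{1-\a}{p}=\Gamma(p+\a-1)/[\Gamma(\a-1)p!]$, this produces the series $\sum_{p\ge0}\frac{\Gamma(\a+p-1)}{\Gamma(\a-1)p!}(\ln(t/a))^{1-\a-p}(\ln(\tau/a))^p$, which I truncate at $p=N$ and substitute into the integral. For each $p$, one further integration by parts using $(x'+\tau x'')d\tau=d[\tau x'(\tau)]$, followed (for $p\ge 2$) by a second one shifting $d/d\tau$ from $x$ back onto $(\ln(\tau/a))^{p-1}$, produces precisely the quantities $tx'(t)(\ln(t/a))^p$, $(\ln(t/a))^{p-1}x(t)$, and $V_p(t)$ from the theorem statement.

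The decisive step is collecting coefficients. Separating the $p=0$, $p=1$, and $p\ge 2$ contributions, the boundary residues at $\tau=a$ cancel exactly the explicit $x(a)$ and $ax'(a)$ terms above; the cancellation relies on $\Gamma(2-\a)=(1-\a)\Gamma(1-\a)$ together with $(\a-1)/\Gamma(2-\a)=-1/\Gamma(1-\a)$. Grouping the surviving pieces and simplifying via the elementary identities $\Gamma(\a-1)=\Gamma(\a)/(\a-1)$ and $\Gamma(-\a)\Gamma(1+\a)=-\Gamma(1-\a)\Gamma(\a)$, the coefficients of $(\ln(t/a))^{-\a}x(t)$, $(\ln(t/a))^{1-\a}tx^{(1)}(t)$, and $(\ln(t/a))^{1-\a-p}V_p(t)$ turn out to equal exactly $A$, $B$, and $C_p$.

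Finally, for the error bound I would keep the tail $p\ge N+1$ of the binomial expansion as the explicit remainder $E(t)$. Pulling $\max_{\tau\in[a,t]}|x'(\tau)+\tau x''(\tau)|$ out of the integral and using $\int_a^t(\ln(\tau/a))^p d\tau\le (t-a)(\ln(t/a))^p$, one reduces $|E(t)|$ to $\max|x'+\tau x''|\cdot(t-a)(\ln(t/a))^{1-\a}/\Gamma(2-\a)$ times the Gamma-function tail $\sum_{p\ge N+1}|\Gamma(\a+p-1)/[\Gamma(\a-1)p!]|$. The asymptotic $\Gamma(\a+p-1)/p!=O(p^{\a-2})$, sharpened to a Wendel-type inequality, produces both the $N^{\a-1}$ decay and the explicit constant $\exp((1-\a)^2+1-\a)$ in \eqref{eq:bound:e}. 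I expect the main obstacle to be the bookkeeping in the coefficient-collection step, where every cancellation between boundary residues and the expanded binomial series must be tracked through several Gamma function identities; the tail estimate, while requiring a sharp Wendel-type bound to produce the stated exponential constant, is comparatively routine.
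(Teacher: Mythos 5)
The paper itself gives no proof of Theorem~\ref{teo} --- it is imported verbatim from \cite{Almeida} --- and your outline correctly reconstructs the argument given in that reference: two integrations by parts to reach the kernel $\left(\ln\frac{t}{\tau}\right)^{1-\a}\bigl(x'(\tau)+\tau x''(\tau)\bigr)$, Newton's binomial expansion of $\bigl(1-\ln(\tau/a)/\ln(t/a)\bigr)^{1-\a}$, term-by-term integration by parts producing the $tx^{(1)}(t)$, $x(t)$ and $V_p(t)$ contributions, cancellation of the $x(a)$ and $ax'(a)$ residues, and the tail estimate via $\left|\Gamma(p+\a-1)/(\Gamma(\a-1)p!)\right|\le e^{(1-\a)(2-\a)}p^{\a-2}$ summed against $\int_N^\infty p^{\a-2}\,dp=N^{\a-1}/(1-\a)$. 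All the Gamma-function identities you invoke check out (in particular $\Gamma(-\a)\Gamma(1+\a)=-\Gamma(\a)\Gamma(1-\a)$, which reconciles your coefficient with the stated $C_p$), so this is essentially the same proof as in the cited source.
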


In \cite{Almeida} the authors obtain a more general form of expansion,
with sums up to the $n$th derivative. Expansion formulas for
the Hadamard fractional integrals are also given.


\section{Fractional Variational Problems and Discretization}
\label{sec:3}

Necessary optimality conditions for the Hadamard fractional calculus of variations are
trivially obtained as corollaries from the results of \cite{MR2989364,MyID:268}.
However, in general, there are no analytical methods to solve
such fractional differential equations. Therefore, to establish simple
numerical methods is an important issue. Here
we provide a direct method to address such variational problems.
Our idea is simple but effective and easy to implement.

The problem of the fractional calculus of variations
under consideration is stated in the following way:
minimize the functional
\begin{equation}
\label{eq:J}
J(x)=\int_a^b L\left(t,x(t),\LHD x(t)\right)\,dt
\end{equation}
subject to the boundary conditions
$$
x(a)=x_a \quad \mbox{and} \quad x(b)=x_b,
$$
where $x_a$ and $x_b$ are two given reals and $0 < \alpha < 1$.
For other types of fractional variational problems
we refer the reader to \cite{book:frac}.
Note that functions $x$ should be absolutely continuous in order
to guarantee the existence of the Hadamard fractional derivatives
\cite[Lemma 2.34]{Kilbas}. To apply our method,
more smoothness is, however, required: admissible functions
are assumed to be $C^2$, in agreement with Theorem~\ref{teo}.
To solve the problem, we first replace
the fractional operator by the sum given in Theorem~\ref{teo}.
By doing so, we get a classical optimal control problem: minimize the functional
\begin{multline}
\label{eq:Japprox}
J_{approx}(x,u,V_2,\ldots,V_n)
=\int_a^b L\left(t,x(t),A\left(\ln\frac{t}{a}\right)^{-\a} x(t)\right.\\
\left.+B\left(\ln\frac{t}{a}\right)^{1-\a} tu(t)
+\sum_{p=2}^N C_p\left(\ln\frac{t}{a}\right)^{1-\a-p}V_p(t)\right)\,dt
\end{multline}
subject to the dynamic constraints
$$
\left\{\begin{array}{l}
x'(t)=u(t),\\
V_p'(t)=(p-1)\left(\ln\frac{t}{a}\right)^{p-2}\frac{x(t)}{t},
\quad p \in \{2,\ldots,N\},
\end{array}\right.
$$
and the boundary conditions
$$
\left\{\begin{array}{l}
x(a)=x_a,\\
x(b)=x_b,\\
V_p(a)=0,  \quad p \in \{2,\ldots,N\}.
\end{array}\right.
$$
This is a classical optimal control problem with state functions $x$ and $V_p$,
$p \in \{2,\ldots,N\}$, and control $u$. We solve it
applying any standard discretization procedure \cite{Betts}.

The error of approximating an Hadamard derivative is given by Theorem~\ref{teo}.
Using the bound for the error \eqref{eq:bound:e}, it is possible to get a bound for
the error of approximating the variational functional \eqref{eq:J} by the approximated value
\eqref{eq:Japprox}.

\begin{theorem}
Let $x\in C^{2}[a,b]$ and $N\geq 2$.
The error of approximating
the variational functional \eqref{eq:J}
by \eqref{eq:Japprox}
is bounded as follows:
\begin{equation*}
\left|J(x) - J_{approx}(x,u,V_2,\ldots,V_n)\right| \le M \int_a^b \tilde{E}(x,t) dt,
\end{equation*}
where
$$
M := \max_{t \in [a,b]} \left|\partial_3 L\left(t,x(t),\LHD x(t)\right)\right|
$$
and $\tilde{E}(x,t)$ is given by \eqref{eq:bound:e}.
\end{theorem}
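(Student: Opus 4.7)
The plan is to reduce the bound on the functional-level error to the pointwise error bound \eqref{eq:bound:e} from Theorem~\ref{teo}. First I would observe that when $u$, $V_2,\ldots,V_N$ satisfy the dynamic constraints and boundary conditions prescribed in Section~\ref{sec:3}, one has $u(t)=x^{(1)}(t)$ and $V_p(t)=\int_a^t(p-1)(\ln(\tau/a))^{p-2}x(\tau)/\tau\,d\tau$, so the integrand of \eqref{eq:Japprox} is identically $L(t,x(t),{_a\mathcal{\widetilde{D}}_t^\a} x(t))$. Consequently, the error of interest collapses to a single integral:
\begin{equation*}
J(x) - J_{approx}(x,u,V_2,\ldots,V_N)
=\int_a^b\left[L\bigl(t,x(t),\LHD x(t)\bigr)-L\bigl(t,x(t),{_a\mathcal{\widetilde{D}}_t^\a} x(t)\bigr)\right]dt.
\end{equation*}

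Next I would apply the mean value theorem to $L$ in its third slot (assuming $L$ is $C^{1}$ in that variable, an implicit regularity requirement), which produces an intermediate value $\theta(t)$ between $\LHD x(t)$ and ${_a\mathcal{\widetilde{D}}_t^\a} x(t)$ such that
\begin{equation*}
L\bigl(t,x(t),\LHD x(t)\bigr)-L\bigl(t,x(t),{_a\mathcal{\widetilde{D}}_t^\a} x(t)\bigr)
=\partial_3 L\bigl(t,x(t),\theta(t)\bigr)\cdot\bigl(\LHD x(t)-{_a\mathcal{\widetilde{D}}_t^\a} x(t)\bigr).
\end{equation*}
Taking absolute values, passing them inside the integral, and using the pointwise bound $|\LHD x(t)-{_a\mathcal{\widetilde{D}}_t^\a} x(t)|\le\tilde{E}(x,t)$ given by \eqref{eq:bound:e}, the result follows once the factor $|\partial_3 L(t,x(t),\theta(t))|$ is majorised by $M$.

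The one genuinely delicate point is that the mean value theorem produces $\theta(t)$ lying between the exact and approximate fractional derivative, whereas $M$ is defined using $\partial_3 L$ evaluated at the exact derivative $\LHD x(t)$. I would handle this either by interpreting $M$ as the supremum of $|\partial_3 L|$ over a neighbourhood that contains every such $\theta(t)$ (which, by continuity of $\partial_3 L$ and the smallness of $\tilde{E}(x,t)$ for moderate $N$, differs from the stated $M$ by an arbitrarily small amount) or by explicitly adding to the hypotheses that $\partial_3 L$ is bounded by $M$ on the relevant segment. With either reading, combining the two bounds yields
\begin{equation*}
\bigl|J(x)-J_{approx}(x,u,V_2,\ldots,V_N)\bigr|
\le \int_a^b M\,\tilde{E}(x,t)\,dt = M\int_a^b\tilde{E}(x,t)\,dt,
\end{equation*}
which is the claimed inequality. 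I anticipate no further obstacles beyond this minor technical clarification about the domain on which $M$ is taken.
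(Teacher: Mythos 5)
Your proposal is correct and follows essentially the same route as the paper, whose entire proof is the one-line remark that the result ``follows from Taylor's theorem and Theorem~\ref{teo}'': your mean-value-theorem expansion of $L$ in its third argument is precisely the intended application of Taylor's theorem, combined with the pointwise bound $\tilde{E}(x,t)$ from Theorem~\ref{teo}. Your observation that the intermediate point $\theta(t)$ need not coincide with $\LHD x(t)$, so that the stated $M$ should really be a supremum over the segment joining the exact and approximate derivatives, is a legitimate refinement of a detail the paper glosses over, and does not affect the essence of the argument.
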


\begin{proof}
It follows from Taylor's theorem and Theorem~\ref{teo}.
\end{proof}


\section{An Example}
\label{sec:4}

Let us minimize the functional
\begin{equation}
\label{ex1}
J(x)=\int_1^2 \left(\LHDHz x(t)-\frac{\sqrt{\ln t}}{\Gamma(1.5)}\right)^2\,dt
\end{equation}
subject to the boundary conditions
\begin{equation}
\label{ex1:bc}
x(1)=0 \quad \mbox{and} \quad x(2)=\ln 2.
\end{equation}
It is easy to check that the (global) solution is $\overline{x}(t)=\ln t$. Indeed,
$$
\LHDHz \overline{x}(t)=\frac{\sqrt{\ln t}}{\Gamma(1.5)}
$$
and the functional \eqref{ex1} is nonnegative with $J\left(\overline{x}\right) = 0$.
Using the approach explained in Section~\ref{sec:3}, we approximate this problem
by the following one: minimize the functional
\begin{multline}
\label{ex2}
J_{approx}(x,u,V_2,\ldots,V_N)
=\int_1^2 \left(A\left(\ln t\right)^{-0.5} x(t)
+B\left(\ln t\right)^{0.5} tu(t)\right.\\
\left.+\sum_{p=2}^N C_p\left(\ln t\right)^{0.5-p}V_p(t)
-\frac{\sqrt{\ln t}}{\Gamma(1.5)}\right)^2\,dt
\end{multline}
subject to the dynamic constraints
\begin{equation}
\label{eq:dc}
\left\{\begin{array}{l}
x'(t)=u(t),\\
V_p'(t)=(p-1)\left(\ln t\right)^{p-2}\frac{x(t)}{t},  \quad p \in \{2,\ldots,N\},
\end{array}\right.
\end{equation}
and the boundary conditions
\begin{equation}
\label{eq:bc}
\left\{\begin{array}{l}
x(1)=0,\\
x(2)=\ln 2,\\
V_p(1)=0,  \quad p \in \{2,\ldots,N\}.
\end{array}\right.
\end{equation}
Fix $N=3$ and $k\in \mathbb{N}$ and consider the maximum error given by
$$
E_k(x,\tilde x)= \max_{i\in\{1,\ldots,k\}} | x(t_i)-\tilde x(t_i) |.
$$
We apply a direct discretization process with $k$ points to problem \eqref{ex2}--\eqref{eq:bc},
and then we use Ipopt \cite{IPOPT} to find a solution for it. In Figure~\ref{state} we show the results
for the state function $x$ with $k=100$ and $k=250$. From these results,
we obtain the errors $E_{100}(x,\tilde x)=0.029558802$ and $E_{250}(x,\tilde x)=0.011928571$,
and we can see that the error decreases as the number of points $k$ increases.
\begin{figure}[!ht]
 \centering
\includegraphics[scale=0.3]{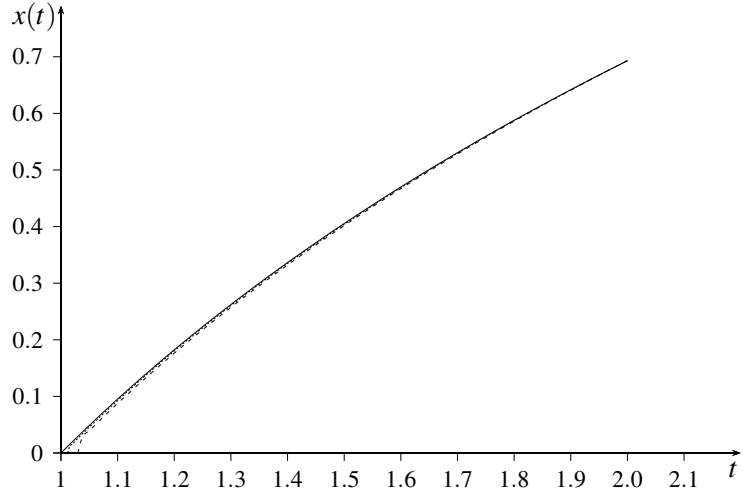}
\caption{Exact solution to \eqref{ex1}--\eqref{ex1:bc} (continuous line)
versus numerical approximations to \eqref{ex2}--\eqref{eq:bc}
with $k=250$ (dotted line) and $k=100$ (dashed line).}
\label{state}
\end{figure}
As is shown in Figure~\ref{state}, when $k=250$
the approximation is almost indistinguishable from the exact solution.


\section{Conclusion}
\label{sec:5}

The Hadamard fractional operator was introduced by J. Hadamard in 1892 \cite{Hadamard}.
This notion is different from the better known Riemann--Liouville and Caputo derivatives,
in the sense that the kernel of the integral (in the definition of Hadamard
derivative) contains a logarithmic function of arbitrary exponent.
Recently, there has been an increasing interest in studying
variational problems with generalized fractional operators, that is,
with fractional operators depending on a general kernel --- see, e.g.,
\cite{MR2989364,MyID:268}. Such generalized theory includes, as a particular case,
the Hadamard variational calculus. However, available results are,
excluding the particular cases of Riemann--Liouville and Caputo,
only analytical. Here we investigated nonstandard fractional variational problems
depending on Hadamard fractional derivatives.
Relation between the Hadamard fractional operator and a sum involving integer-order
derivatives is used to rewrite the fractional problem into a classical optimal control problem.
The latter problem is then solved by application of standard numerical techniques.
The effectiveness of the numerical approach is illustrated with an example.


\section*{Acknowledgments}

This article was supported by Portuguese funds through the
\emph{Center for Research and Development in Mathematics and Applications} (CIDMA),
and \emph{The Portuguese Foundation for Science and Technology} (FCT),
within project PEst-OE/MAT/UI4106/2014.
The authors are grateful to two anonymous referees
for valuable remarks and comments, which
significantly contributed to the quality of the paper.



\end{document}